\numberwithin{equation}{section}
\def \o {\omega}
\def \E {\mathbb E}
\def \P {\mathbb{P}}
\def \approxd {\,{\buildrel d \over \approxd}\,}
\renewenvironment{proof}[1][\proofname]{\par \normalfont \trivlist
 \item[\hskip\labelsep\itshape #1]\ignorespaces
}{
 \hspace*{\fill}$\Box$ \endtrivlist
}
\renewcommand{\proofname}{\noindent {\bf Proof}}
\title{Applications of the $M^X/\text{semi-Markov}/1$ queue to road traffic}
\begin{document}

\author{Abhishek \and
        Marko~Boon \and
        Rudesindo~N\'u\~nez-Queija
        }

\institute{Abhishek $\&$  Rudesindo~N\'u\~nez~Queija \at
           Korteweg-de Vries Institute for Mathematics, University of Amsterdam, Amsterdam, The Netherlands\\
           \email{\{Abhishek, nunezqueija\}@uva.nl}
         \and
        Marko~Boon \at
        Department of Mathematics and Computer Science, Eindhoven University of Technology, Eindhoven, The Netherlands \\
        \email{m.a.a.boon@tue.nl}
       }

\date{\today}
\maketitle
\begin{abstract}
The single server queue with multiple customer types and semi-Markovian service times, sometimes referred to as the $M/SM/1$ queue, has been well-studied since its introduction by Neuts in 1966. In this paper, we apply an extension of this model, with batch arrivals and exceptional first service, to road traffic situations involving multiple streams of conflicting traffic. In particular, we use it in the context of \emph{gap acceptance models} where low-priority traffic needs to cross (or, depending on the application, merge with) another traffic flow of higher priority.\\

Traditionally, gap acceptance models are based on the $M/G/1$ queue with exceptional first service, in this application area commonly referred to as the $M/G2/1$ queue. In an earlier study \cite{AbhishekMergingModel}, we showed how the $M^X/SM/1$ queue with exceptional first service can be applied in this context to extend the model with driver impatience and more realistic merging behaviour. In this paper, we show how this same queueing system can be used to model a Markov modulated Poisson arrival process of the high-priority traffic stream. Due to its flexibility, this arrival process is very relevant in this application, particularly because it allows the modelling of platoon forming of vehicles.
The correlated inter-arrival times of these high priority vehicles cause the merging times of two subsequent low priority vehicles to become dependent as well (as they correspond with the service times in the underlying queueing model).
We derive the waiting time and sojourn time distributions of an arbitrary customer, showing that these depend on the position of the customer inside the batch, as well as on the type of the first customer in the batch.

\end{abstract}
\keywords{batch arrivals, $M^X/SM/1$ queue, correlated service times, waiting time, sojourn time, gap acceptance models, Markov modulated Poisson process, unsignalized road intersections.}

\section{Introduction}
The single server queue with multiple customer types and semi-Markovian service times, sometimes referred to as the $M/SM/1$ queue, has been well-studied  since its introduction by Neuts \cite{neuts66}. An overview of the earlier existing literature \cite{AbhishekHeavyTraffic,cin_s,gaver,neuts77a,neuts77b} can be found in 
\cite{QUESTA2017}, in which the transient and stationary queue length distributions in a single server model with batch arrivals and semi-Markov service times were analyzed. In this paper, we apply an extension of this model, with batch arrivals and exceptional first service, to road traffic situations involving multiple streams of conflicting traffic \cite{drew1,drew2,drew3,wei,wu2001}. In particular, we use it in the context of \emph{gap acceptance models} \cite{abhishekcomsnets2016,AbhishekMergingModel,heid94,heid97} where low-priority traffic needs to cross (or, depending on the application, merge with) another traffic flow of higher priority. Drivers in the low-priority traffic flow wait until a sufficiently large gap arises between two subsequent vehicles in the high-priority traffic flow.
This minimal gap, which may be vehicle-specific, is commonly referred to as the \emph{critical headway} denoted by $T$.\\

In the gap acceptance literature, three variations of driver behavior are distinguished (cf. \cite{AbhishekMMPPandImpatience,heid97}). With the first behavior type (referred to as B$_1$ in this paper), all low-priority vehicle drivers require the same constant critical headway to merge with the high-priority stream of vehicles. In the second case (B$_2$), which is commonly referred to as \emph{inconsistent} gap acceptance behavior, each low-priority driver samples a critical headway from a given distribution at each new attempt. Its natural counterpart is known as \emph{consistent} gap acceptance behavior (B$_3$), where the low-priority driver samples a critical headway from a given distribution only for his first attempt and then uses the same value at his subsequent attempts. \\

Traditionally, gap acceptance models are based on the $M/G/1$ queue with exceptional first service (cf. \cite{welch,yeo,yeoweesakul}), in this application area commonly referred to as the $M/G2/1$ queue.
The ``service times'' correspond to the time required to search for a sufficiently large gap and crossing the intersection or, depending on the application, merging with the high-priority traffic flow.
In an attempt to make the standard gap acceptance model more realistic, \cite{AbhishekMergingModel}, we
developed a general framework  based on the $M/SM/1$ queue with batch arrivals and exceptional first service (which we refer to as the $M^X/SM2/1$ queue; see \cite{QUESTA2017,AbhishekHeavyTraffic}) and
showed how this queueing model can be applied in this context to extend the standard gap acceptance model with driver impatience and more realistic merging behaviour. In the present paper, we show how to exploit the versatility of the $M^X/SM2/1$ queue to model a Markov modulated Poisson arrival process of the high-priority traffic stream. Due to its flexibility, this arrival process is very relevant in this application, particularly because it allows the modelling of platoon forming of vehicles. We refer the reader to 
\cite{AbhishekMMPPandImpatience} for a brief overview of the earlier existing literature relevant to it.
The correlated inter-arrival times of these high-priority vehicles cause the merging times of two subsequent low priority vehicles to become dependent as well (as they correspond with the service times in the underlying queueing model).\\

The contributions of this paper are twofold. First, we show how to derive the waiting time and sojourn time distributions of an arbitrary customer for the $M^X/SM/1$ queueing system with exceptional first service, showing that these depend on the position of the customer inside the batch, as well as on the type of the first customer in the batch. Second, we focus on the application of this queueing model to road traffic situations involving multiple conflicting traffic streams, where on the minor road, vehicles arrive in batches according to a Poisson process and the arrival process on the major road is a Markov modulated Poisson process. Based on numerical examples, we demonstrate the impact of the three types of the driver behavior (B$_1$, B$_2$ and B$_3$), on the delay on the minor road. More specifically, we show that the expected waiting times for the all three behavior types depend not only on the mean batch size, but also on the full distribution of the batch sizes. \\

The remainder of this paper is organized as follows. In Section \ref{description}, we present the description of the queueing model. Using  the results from \cite{AbhishekHeavyTraffic}, we obtain the LST (Laplace-Stieltjes transform) of the steady-state waiting time and sojourn time distributions of customers as well as batches in Section \ref{stationay waiting time}. In Section \ref{App to road traffic}, we first give several applications in which the extended queueing model arises, and then study the application to road traffic situations involving multiple conflicting traffic streams. In Section \ref{numericalresults:ch5}, we present the numerical examples.

\section{The $M^X/SM2/1$ queueing model}\label{sect:queueingmodel}
In this section, we first describe the $M^X/SM2/1$ queuing model. Subsequently, we use the results from our paper \cite{AbhishekHeavyTraffic} on the steady-state distribution of the queue length, to derive the waiting time and sojourn time distributions.
\subsection{Model description}\label{description}
 Customers arrive in batches  at a single-server queuing system according to a Poisson process  with intensity $\lambda$. The arriving batch size is denoted by the random variable $B$, with probability generating function (PGF) $B(z)$, for $|z|\leq 1$ (zero-sized batches are not allowed, i.e. $B\geq 1$). Customers are served individually and the service process is considered as a semi-Markov (SM) process. In addition, we assume that the first customer in each busy period has a different service time distribution  than regular customers served in the busy period such that, for $i=1,2,\dots,N$,
\begin{align}
\tilde{G}_{ij}(s)&=\E[e^{-sG^{(n)}}1_{\{J_{n+1}=j\}}|J_n=i, X_{n-1}\geq 1],\label{G_{ij}(s)}\\
\tilde{G}^*_{ij}(s)&=\E[e^{-sG^{(n)}}1_{\{J_{n+1}=j\}}|J_n=i, X_{n-1}=0]\label{G*_{ij}(s)},
\end{align} where $J_n$ is the type of the $n$-th customer and $G^{(n)}$ is its service time,
 and $X_{n-1}$ is the number of customers in the system at the departure of the $(n-1)$-th customer. \\

In particular, for $i,j=1,2,\dots,N$, we define
\begin{align}
P_{ij}=\tilde{G}_{ij}(0)=\P(J_{n+1}=j|J_n=i,X_{n-1}\geq 1),\label{p_ij}\\
P^*_{ij}=\tilde{G}^*_{ij}(0)=\P(J_{n+1}=j|J_n=i,X_{n-1}= 0).\label{p^*_ij}
\end{align}

To be consistent with the terminology used in the gap acceptance literature, we refer to this queueing system as the $M^X/SM2/1$ queue.
In this section, for improved readability, we briefly sketch the proof in \cite{AbhishekHeavyTraffic} to obtain the PGF of the queue length distribution at departure times of customers, which will be used to derive the waiting time and sojourn time distributions in the next section.\\

The queue length distribution at departure times can be obtained using the following recurrence relation:
\begin{align}
X_n = \left\{
\begin{array}{l l}
\ X_{n-1}-1+A_n & \quad \text{if $X_{n-1} \geq 1$ }\\
A_n +B_n-1& \quad \text{if $X_{n-1} =0$}
\end{array} \right., ~~~ n=1,2,3,\dots,
\label{recurA}
\end{align} where $A_{n}$ is the number of arrivals during the service time of the $n$-th customer
, and $B_n$ is the size of the batch in which $n$-th customer arrived, with PGF $B(z)$, for $|z|\leq 1$.
The conditional PGFs of the  queue length distribution at departure epochs are obtained by solving the following system of $N$ equations:
\begin{align}
& (z-A_{jj}(z))f_j(z)-\sum_{i=1,i\neq j}^{N}A_{ij}(z)f_i(z)
=\sum_{i=1}^{N}(B(z)A^{*}_{ij}(z)-A_{ij}(z))
f_i(0),\quad \quad ~~~~~~ j=1,2,\dots,N, \label{twentysevenA}
\end{align}
where $f_i(z)= {\rm lim}_{n \rightarrow \infty} \E[z^{X_n}1_{\{J_{n+1}=i\}}]$, $A_{ij}(z)=\E[z^{A_{n}}1_{\{J_{n+1}=j\}}|J_n=i,X_{n-1}\geq 1]$, $A^{*}_{ij}(z)=\E[z^{A_{n}}1_{\{J_{n+1}=j\}}|J_n=i,X_{n-1}=0]$, and hence the PGF of the  queue length distribution at departure epochs is given by
\begin{equation}
F(z)=\sum_{i=1}^{N}f_i(z). \label{F(z)}
\end{equation}
As customers arrive in the system according to a batch Poisson process with rate $\lambda$, for $i,j=1,2,\dots,N$, we obtain,
\begin{align}
  A_{ij}(z)=&\tilde{G}_{ij}(\lambda(1-B(z))),\label{rel_A_G} \\
  A^{*}_{ij}(z)=&\tilde{G}^{*}_{ij}(\lambda(1-B(z))).\label{rel_A*_G*}
\end{align}

\subsection{Waiting time and sojourn time}\label{stationay waiting time}
In this section, we shall determine the  waiting time and sojourn time distributions of an arbitrary batch as well as an arbitrary customer, noticing that the waiting time and sojourn time of a customer depend on its position in the batch, as well as on the type of service of the first customer in its batch.\\

%
To determine the waiting times and sojourn times of customers, firstly, we modify our model in such a way that  all customers in the same batch are served together as a {\em super customer}. Let $\mathcal{G}^{(n)}$ and $\mathcal{J}_n$ be the service time and the service type of the $n$-th  super customer respectively. Then, the LST of the conditional service time of a  super customer is defined as, for $\text{Re}(s)\geq 0, i,j=1,2,\dots,N$,
\begin{align}
\tilde{\mathcal{G}}_{ij}(s)&=\E[e^{-s \mathcal{G}^{(n)}}1_{\{\mathcal{J}_{n+1}=j\}}|\mathcal{J}_{n}=i, X_{n-1}\geq 1], \label{LST super_ij}\\
\tilde{\mathcal{G}}^{*}_{ij}(s)&=\E[e^{-s \mathcal{G}^{(n)}}1_{\{\mathcal{J}_{n+1}=j\}}|\mathcal{J}_{n}=i, X_{n-1}=0]. \label{LST super_ij*}
\end{align}

Now, we can obtain the LST of the conditional service time of a super customer in terms of the LST of the conditional service time of an individual customer as
\begin{align}
\tilde{\mathcal{G}}_{ij}(s)&=\E\left[[\tilde{\bold{G}}(s)^B]_{ij}\right], \\
\tilde{\mathcal{G}}^{*}_{ij}(s)&=\sum_{k=1}^{N}\tilde{G}^{*}_{ik}(s)\E\left[[\tilde{\bold{G}}(s)^{(B-1)}]_{kj}\right], \quad i,j=1,2,\dots,N,
\end{align} where $\tilde{\bold{G}}(s)=[\tilde{G}_{ij}(s)]$ is a matrix of order $N \times N$, and $[\tilde{\bold{G}}(s)^B]_{ij}$ is the $(i,j)$th element of matrix $\tilde{\bold{G}}(s)^B,$ for $i,j=1,2,\dots, N$. \\

Let $\mathcal{X}^{d}_n, \mathcal{X}^{bs}_n$ be the number of super customers in the queue at the departure of, and  the beginning of service of the $n$-th super customer respectively. We can derive the PGF of the number of super customers in the  queue, in steady state, at the departure of a super customer by letting $A_{ij}(z)=\tilde{\mathcal{G}}_{ij}(\lambda(1-z))$, $A^*_{ij}(z)=\tilde{\mathcal{G}}^{*}_{ij}(\lambda(1-z))$ and $B(z)=z$ in Equation \eqref{twentysevenA}.\\

Therefore, now, we know the distribution of the number of super customers at the departure of the super customer. But, to determine the waiting time of a super customer, using the distributional form of Little's law, we need to find the distribution of the number of super customers at the beginning of the service of a  super customer.  \\

We can write
\begin{align*}
\mathcal{X}^{bs}_n=\begin{cases}
\mathcal{X}^d_{n-1}-1, \quad &\text{if }\mathcal{X}^d_{n-1}\geq 1,\\
0, \quad &\text{if }\mathcal{X}^d_{n-1}=0.\\
\end{cases}
\end{align*}

This implies that
\begin{align}
\E[z^{\mathcal{X}^{bs}_n}1_{\{\mathcal{J}_n=i\}}]=&\E[z^{\mathcal{X}^{bs}_n}1_{\{\mathcal{J}_n=i\}} 1_{\{\mathcal{X}^d_{n-1}=0\}}]+\E[z^{\mathcal{X}^{bs}_n}1_{\{\mathcal{J}_n=i\}} 1_{\{\mathcal{X}^d_{n-1}\geq 1\}}],
\label{queue_BS}
\end{align} where
\begin{align}
\E[z^{\mathcal{X}^{bs}_n}1_{\{\mathcal{J}_n=i\}} 1_{\{\mathcal{X}^d_{n-1}=0\}}]
=&\P(\mathcal{X}^d_{n-1}=0,\mathcal{J}_n=i), \label{queue_BS_empty}
\end{align} and
\begin{align}
\E[z^{\mathcal{X}^{bs}_n}1_{\{\mathcal{J}_n=i\}} 1_{\{\mathcal{X}^d_{n-1}\geq 1\}}]
&=\E[z^{\mathcal{X}^{d}_{n-1}-1}1_{\{\mathcal{J}_n=i\}}]-\E[z^{\mathcal{X}^d_{n-1}-1}1_{\{\mathcal{J}_n=i\}}1_{\{\mathcal{X}^d_{n-1}=0\}}]\nonumber\\
&=\frac{1}{z}\left(\E[z^{\mathcal{X}^{d}_{n-1}}1_{\{\mathcal{J}_n=i\}}]-\P(\mathcal{X}^d_{n-1}=0,\mathcal{J}_n=i)\right). \label{queue_BS_nonempty}
\end{align}

 Let $\mathcal{W}^{sc}_n$ and $\mathcal{S}^{sc}_n$ be  the waiting time and sojourn time of the $n$-th  super customer respectively. By the distributional form of Little's law, we obtain
 \begin{align*}
  \E[z^{\mathcal{X}^{d}_{n}}1_{\{\mathcal{J}_{n+1}=i\}}]&=\E[e^{-\lambda(1-z)\mathcal{S}^{sc}_{n}}1_{\{\mathcal{J}_{n+1}=i\}}], \\ \E[z^{\mathcal{X}^{bs}_n}1_{\{\mathcal{J}_n=i\}}1_{\{\mathcal{X}^d_{n-1}=0\}}]&=\E[e^{-\lambda(1-z)\mathcal{W}^{sc}_n}1_{\{\mathcal{J}_n=i\}}1_{\{\mathcal{X}^d_{n-1}=0\}}],\\
   \E[z^{\mathcal{X}^{bs}_n}1_{\{\mathcal{J}_n=i\}}1_{\{\mathcal{X}^d_{n-1}\geq 1\}}]&=\E[e^{-\lambda(1-z)\mathcal{W}^{sc}_n}1_{\{\mathcal{J}_n=i\}}1_{\{\mathcal{X}^d_{n-1}\geq 1\}}].
 \end{align*}
 Letting $s=\lambda(1-z)$, then yields
 \begin{align}
\E[e^{-s\mathcal{S}^{sc}_{n}}1_{\{\mathcal{J}_{n+1}=i\}}]&= \E\left[\left(1-\frac{s}{\lambda}\right)^{\mathcal{X}^{d}_{n}}1_{\{\mathcal{J}_{n+1}=i\}}\right],\label{sojourn_super_type}\\
\E[e^{-s\mathcal{W}^{sc}_n}1_{\{\mathcal{J}_n=i\}}1_{\{\mathcal{X}^d_{n-1}=0\}}]&= \E\left[\left(1-\frac{s}{\lambda}\right)^{\mathcal{X}^{bs}_n}1_{\{\mathcal{J}_n=i\}}1_{\{\mathcal{X}^d_{n-1}=0 \}}\right],\label{waiting_super_empty}\\
\E[e^{-s\mathcal{W}^{sc}_n}1_{\{\mathcal{J}_n=i\}}1_{\{\mathcal{X}^d_{n-1}\geq 1\}}]&= \E\left[\left(1-\frac{s}{\lambda}\right)^{\mathcal{X}^{bs}_n}1_{\{\mathcal{J}_n=i\}}1_{\{\mathcal{X}^d_{n-1}\geq 1\}}\right]\label{waiting_super_nonempty}.
 \end{align}
 Subsequently, we obtain
  \begin{align}
\E[e^{-s\mathcal{S}^{sc}}]&= \E\left[\left(1-\frac{s}{\lambda}\right)^{\mathcal{X}^{d}}\right],\label{sojourn_supercustomer}\\
\E[e^{-s\mathcal{W}^{sc}}]&= \E\left[\left(1-\frac{s}{\lambda}\right)^{\mathcal{X}^{bs}}\right],\label{waiting_supercustomer}
 \end{align} where $\mathcal{S}^{sc}=\lim_{n\to \infty}\mathcal{S}^{sc}_n,\mathcal{W}^{sc}=\lim_{n\to \infty}\mathcal{W}^{sc}_{n},\mathcal{X}^{d}=\lim_{n\to \infty}\mathcal{X}^{d}_{n},\mathcal{X}^{bs}=\lim_{n\to \infty}\mathcal{X}^{bs}_n$.\\

Finally, we obtain the waiting times and sojourn times of individual customers in the batches by conditioning on the position of the customer in the batch, and using the following relations:
\begin{itemize}
\item the waiting time of the first customer in the batch is equal to the waiting time of the  super customer,
\item the waiting time of the $m$-th customer in the batch, for $m>1$, is equal to the waiting time of the super customer plus the service times of the first $(m-1)$ customers in the batch,
\item the sojourn time of the $m$-th customer in the batch, for $m<B$, is equal to the waiting time of the $(m+1)$-th customer,
\item the sojourn time of the last customer in the batch is equal to the sojourn time of the super customer.
\end{itemize}

Let $W^{(m)}$ and $S^{(m)}$ be the steady-state waiting time and sojourn time of the $m$-th customer served in his batch, respectively. Using the aforementioned relations, we obtain
\begin{align}
\E[e^{-sW^{(1)}}]&=\E[e^{-s\mathcal{W}^{sc}}],\\
\E[e^{-sW^{(m)}}]&=\sum_{k=1}^N\sum_{j=1}^{N}\sum_{i=1}^{N}\E[e^{-s\mathcal{W}^{sc}_n}1_{\{\mathcal{J}_n=i\}}1_{\{\mathcal{X}^d_{n-1}=0\}}]\tilde{G}^{*}_{ik}(s)[\tilde{\bold{G}}(s)^{m-2}]_{kj}\nonumber\\
&+\sum_{j=1}^{N}\sum_{i=1}^{N}\E[e^{-s\mathcal{W}^{sc}_n}1_{\{\mathcal{J}_n=i\}}1_{\{\mathcal{X}^d_{n-1}\geq 1\}}][\tilde{\bold{G}}(s)^{m-1}]_{ij},\quad m \geq 2,\label{Ch5:Waiting_W_m}\\
\E[e^{-sS^{(m)}}]&=\E[e^{-sW^{(m+1)}}],\quad m \geq 1. \label{Ch5:Sojourn_S_m}
\end{align}

Now, we are interested in the probability of being  the $m$-th customer served in a batch. For that, we define the arriving batch-size probabilities as  $b_k=\P(B=k)$ for $k\geq 1$. Therefore, the probability that an arbitrary customer arrives in a batch of size $k$, is equal to  $\frac{kb_k}{\E[B]}$ (see Burke \cite{Burke}). And hence, the probability of being  the $m$th customer served in a batch is given by
\begin{align}
r_m=\sum_{k=m}^{\infty}\frac{kb_k}{\E[B]} \frac{1}{k}=\frac{1}{{\E[B]}}\sum_{k=m}^{\infty}b_k.\label{Prob_mth_batch}
\end{align}

Hence, the steady-state waiting and sojourn time LST of an arbitrary customer are given by
\begin{align}
\E[e^{-sW}]&=\sum_{m=1}^{\infty}r_m\E[e^{-sW^{(m)}}],\label{waitin_arbitrary}\\
\E[e^{-sS}]&=\sum_{m=1}^{\infty}r_m\E[e^{-sS^{(m)}}].\label{sojourn_arbitrary}
\end{align}
\begin{remark}
In case that batches have a maximum size of, say, $M$, we can still use Equations \eqref{Ch5:Waiting_W_m} and \eqref{Ch5:Sojourn_S_m}. However, we note that although we define $\E[e^{-sW^{(m)}}]$ for $m=1,2,...,M+1$, there is in fact no $(M+1)$-th customer in the batch. Still, we need to define $\E[e^{-sW^{(M+1)}}]$ to determine $\E[e^{-sS^{(M)}}]$. Alternatively, one can use $\E[e^{-sS^{(M)}}]=\E[e^{-s\mathcal{S}^{sc}}]$.
\end{remark}

\section{Applications to road traffic}\label{App to road traffic}
The queueing model considered in this paper arises in several applications including logistics, production/inventory systems, computer and telecommunication networks. In this section, we focus on the application to road traffic situations involving multiple conflicting traffic streams. More specifically, we consider an unsignalized priority-controlled intersection used by two traffic streams, both of which wish to cross the intersection (see Fig. \ref{fig:intersection}). There are two priorities: the car drivers on the major road have priority over the car drivers on the minor road (and hence do not experience any impact from the car drivers on the minor road).
The low-priority car drivers, on the minor road, cross the intersection as soon as they come across a gap with duration larger than $T$ between two subsequent high-priority cars, commonly referred to as the \emph{critical headway}. On the major road, we consider Markov platooning (see also \cite{AbhishekMMPPandImpatience}) which can be used to model the fluctuations in the traffic density with a dependency between successive gap sizes.\\

\begin{figure}[th]
\begin{center}
\includegraphics[width=0.7\linewidth]{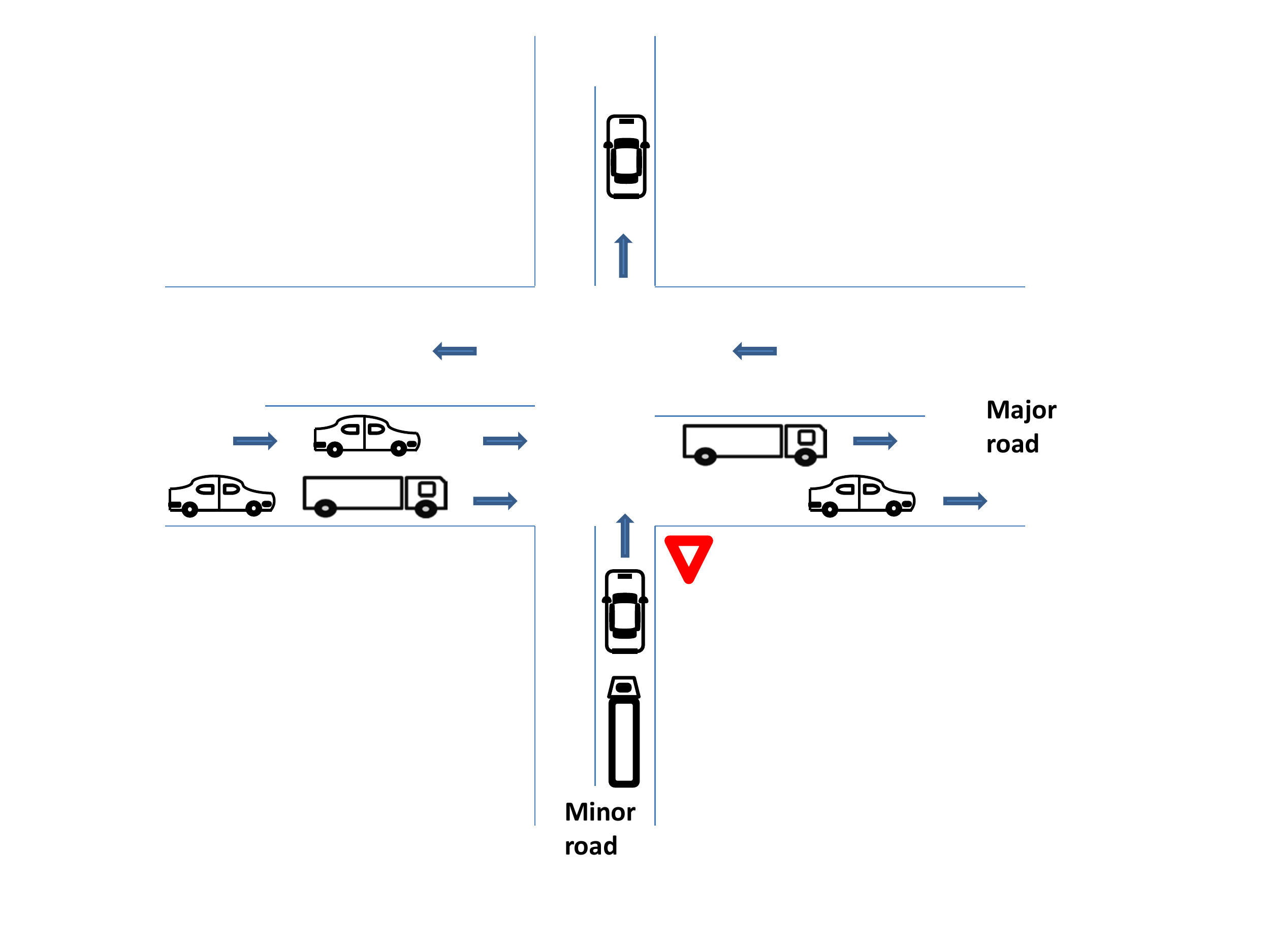}
\end{center}
\caption{An example of an unsignalized intersection considered in this paper.}
\label{fig:intersection}
\end{figure}

On the minor road, cars arrive in batches of size $B$, with PGF $B(z)$, according to a Poisson process with rate $\lambda$. The arrival process on the major road is a Markov modulated Poisson process (MMPP) such that, for $i=1,2,\dots,N$, $q_i$ is the Poisson rate when the continuous time Markov process (so-called background process), $J(t)$, is in phase $i$.
By introducing Markov platooning, an arrival process based on Markov modulation, we create a new, refined way of bunching on the major road. The semi-Markovian service times allow us to capture the required dependence between successive gap sizes.
Platoon forming is a phenomenon that is frequently encountered in practice.
Wu \cite{wu2001} distinguishes between four different traffic flow regimes: free space (no vehicles), free flow (single vehicles), bunched traffic (platoons of vehicles), and queueing. In modern traffic manuals, it is suggested that intersection performance characteristics (such as capacity, which is the reciprocal of the mean service time) can be obtained by analyzing the intersection in one specific regime, and taking weighted averages of the steady-state performance measure under each of the regimes. However, this approach may lead to severe errors and it shown in \cite{AbhishekMMPPandImpatience} that one should build one model that  captures all the variations in traffic flow instead.
For this reason, we will show in this section how to use the single server queue with semi-Markovian service times to develop \emph{one} gap acceptance model, capturing multiple traffic flow regimes on the major road by modeling them with a Markovian arrival process. We show how to obtain the service-time distributions of vehicles on the minor road for each of the three driver behavior types (B$_1$, B$_2$, and B$_3$), which can be plugged into the analysis of Section~\ref{sect:queueingmodel} to obtain the queue length PGF and waiting time LST. We will first define in more detail what we mean by service time in this application.\\

\begin{definition}[Service time]
The service time of a vehicle on the minor road is the time between its arrival at the stop line and the moment when it has crossed the major road. The service time consists of two parts:
\begin{itemize}
\item Scanning for a sufficiently large gap on the major road. This scanning time will be zero in case the remaining time until the next vehicle on the major road arrives is greater than the critical gap $T$;
\item Crossing the road, while freeing up the space for the next car to start scanning. This time is assumed to be equal to $T$, i.e. exactly the size of the critical gap.
\end{itemize}
\end{definition}


The transition probabilities of the background process of the MMPP  are given by
\begin{align*}
\mathbb{P}(J(T)=j|J(0)=i)=[e^{TQ}]_{ij}, \quad  \text{for } i,j=1,2,\dots,N,
\end{align*} with  transition rate matrix  $$Q=\begin{bmatrix}
\mu_{11} & \mu_{12}&\dots &\mu_{1N}\\
\mu_{21}& \mu_{22}& \dots &\mu_{2N}\\
\vdots &\vdots &\dots & \vdots\\
\mu_{N1}& \mu_{N2}& \dots& \mu_{NN}
\end{bmatrix},$$ where $-\mu_{ii}=\mu_{i}=\sum_{j\neq i}\mu_{ij}$.\\

Let $J_n$ and $\bar{J}_n$ be the phase on the major road, seen by the $n$-th low priority car at the \emph{beginning of its service} when the $(n-1)$-th car left the system non empty and empty respectively. In other words, we can say that $J_n$ is the phase on the major road when the $(n-1)$-th car has crossed the major road.
We can write
$$A^{*}_{ij}(z)=\sum_k \bar{P}_{ik}A_{kj}(z),$$ where $\bar{P}_{ik}=\P(\bar{J}_n=k|J_n=i,X_{n-1}=0)$ which is given by\\

$$\bar{P}_{ik}=\frac{\lambda}{\lambda+\mu_i}1_{\{k=i\}}+\frac{\mu_i}{\lambda+\mu_i}\sum_{l\neq i}\frac{\mu_{il}}{\mu_i}\bar{P}_{lk}.$$\\
This implies that \\
$$\lambda \bar{P}_{ik}-\sum_{l}\mu_{il}\bar{P}_{lk}=\lambda1_{\{k=i\}}.$$
We can write  this in matrix form as \\
$$\lambda \bar{P}-Q\bar{P}=\text{diag}(\lambda),$$
and hence we obtain $$\bar{P}=(I-\frac{1}{\lambda}Q)^{-1}.$$
Now we determine the LST of the service time distribution for each of the three types of driver behavior.

\subsection{$\mathbf{B_1}$ (Constant critical gap)} Every driver on the minor road needs the same constant critical headway $T$ to enter the major road. Denote by $G^{(n)}$ the service time of the $n$-th minor road car and $J(t)$ the phase seen by the low priority car driver on the major road at time $t$. We define, for $i,j=1,2,\dots,N$,
 \begin{align}
 G_{ij}(x)&=\mathbb{P}(G^{(n)}\leq x,J_{n+1}=j|J_n=i, X_{n-1}\geq 1)\nonumber\\
 &=\mathbb{P}(G^{(n)}\leq x, J(G^{(n)})=j|J(0)=i),\\
\tilde{G}_{ij}(s)&=\E[e^{-sG^{(n)}}1_{\{J_{n+1}=j\}}|J_n=i, X_{n-1}\geq 1].
\end{align}

Now, firstly, we determine the probability that there is no car on the major road in $[0,T]$ and
$J(T)=j$, given that $J(0)=i$. For that we define
$u_i(t)=\int_{u=0}^{t}1_{\{J(u)=i\}}{\rm d}u$, with $\sum_{i=1}^{N}u_i(t)=t$, and
\begin{align}
\phi_{ij}(t)&=
\mathbb{P}(\text{No car on the major road in $[0,t]$ and $J(t)=j$}|J(0)=i)\label{P_No_car}\\
&=e^{-q_i t}e^{-\mu_i t}1_{\{i=j\}}+\int_{u=0}^{t}\mu_ie^{-\mu_i u}e^{-q_i u}\sum_{k \neq i}\frac{\mu_{ik}}{\mu_i}\phi_{kj}(t-u){\rm d}u\nonumber\\
&=e^{-(q_i+\mu_i) t}1_{\{i=j\}}+\int_{u=0}^{t}e^{-(\mu_i+q_i) u}\sum_{k \neq i}\mu_{ik}\phi_{kj}(t-u){\rm d}u;\nonumber\\
\psi_{ij}(t)&=\begin{cases}
\phi_{ij}(t)q_i & \text{ if } i = j,\\
\phi_{ij}(t)    & \text{ if } i \neq j.
\end{cases}\label{psiij}
\end{align}

We now present in Theorem \ref{thm1} the service-time LST of vehicles on the minor road.

\begin{theorem}\label{thm1}
The LST of the conditional service time $\tilde{G}_{ij}(s)$ for behavior type B$_1$ is the solution to the following system of equations:
\begin{align}
\tilde{G}_{ij}(s)&=e^{-sT}\phi_{ij}(T)+\sum_{k=1}^{N}\tilde{G}_{kj}(s)\int_{t=0}^{T}e^{-st}\psi_{ik}(t){\rm d}t,\label{G_ij_con}
\end{align}\\
for $i,j=1,2,\dots,N$.
\end{theorem}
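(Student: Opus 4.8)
The plan is to derive \eqref{G_ij_con} by conditioning on the first event that affects the service dynamics of the $n$-th minor-road car, namely either (i) a sufficiently large gap on the major road opens up before the next high-priority arrival, or (ii) a high-priority car arrives before a gap of size $T$ has elapsed. First I would recall that the service time decomposes, by the Definition of service time, into a scanning part plus the constant crossing part $T$. The scanning part is exactly the time the minor-road car has to wait until it first sees, at the current instant, a residual inter-arrival time on the major road exceeding $T$; equivalently, the scanning part is the waiting time until the first $T$-length gap in the MMPP arrival stream.

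Next I would set up the renewal-type argument. Starting from phase $i$ at the beginning of service, consider the window $[0,T]$. Either no major-road car arrives in $[0,T]$ — this occurs, jointly with $J(T)=j$, with probability $\phi_{ij}(T)$ as defined in \eqref{P_No_car} — in which case the car crosses, the service time is exactly $T$, contributing $e^{-sT}\phi_{ij}(T)$ to $\tilde G_{ij}(s)$. Or, a major-road car does arrive at some time $t\in[0,T]$, before the full gap $T$ elapses; at that instant the scanning ``restarts'' from the current phase, which I would argue is phase $k$ with ``density'' $\psi_{ik}(t)$. Here the two cases in the definition \eqref{psiij} of $\psi_{ij}$ reflect whether the major-road arrival coincides with a phase change of the background process: if $i=j$, the relevant event is a Poisson arrival at rate $q_i$ while still in phase $i$ and with no phase transition having occurred, whence the factor $q_i$; if $i\neq j$, the background process has transitioned into a different phase $j$ (the transition rate structure being already baked into $\phi_{ij}(t)$), and a major-road arrival occurs ``at'' that configuration. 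Because the scanning process is a delayed renewal process whose increments are governed solely by the current background phase, after this interruption at time $t$ the remaining scanning plus crossing time is distributed as the service time started from phase $k$, independent of the past given $k$; by the strong Markov property this remaining part contributes $e^{-st}\tilde G_{kj}(s)$, and integrating $t$ over $[0,T]$ and summing over $k$ gives the convolution term $\sum_k \tilde G_{kj}(s)\int_0^T e^{-st}\psi_{ik}(t)\D t$. Adding the two contributions yields \eqref{G_ij_con}.

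The main obstacle I anticipate is justifying carefully the decomposition at the interruption time $t$ — in particular, pinning down the correct ``density'' $\psi_{ik}(t)$ and showing it accounts consistently for the competing clocks (the major-road Poisson clock at rate $q_i$, the phase-transition clock with rates $\mu_{ik}$, and the deadline $T$) without double-counting. One has to check that the two-line case split in \eqref{psiij} is exactly what emerges: on the diagonal the only way to be in phase $i$ at the interruption and have it be a major-road arrival is for the arrival to fire at rate $q_i$ with the background chain having stayed (in some excursion sense captured by $\phi_{ii}$) — but actually $\phi_{ii}(t)$ already conditions on being in phase $i$ at time $t$ with no car yet, so multiplying by $q_i\D t$ gives the probability that the first car arrives in $(t,t+\D t)$ while in phase $i$; off the diagonal, $\phi_{ij}(t)$ already carries the transition into phase $j$, and the interruption is the first major-road arrival occurring in that phase — this needs a short argument that one can legitimately ``evaluate'' the renewal at this inhomogeneous stopping time. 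Once that bookkeeping is nailed down, the rest is a routine application of the strong Markov property of the pair $(J(\cdot), \text{major-road counting process})$ and linearity of the LST, and the resulting fixed-point system \eqref{G_ij_con} follows. A final remark: one should also note that the solution to this linear system in $\tilde G_{ij}(s)$ is unique for $\mathrm{Re}(s)\ge 0$ because the matrix $\left(\int_0^T e^{-st}\psi_{ik}(t)\D t\right)_{ik}$ has spectral radius strictly less than $1$ (it is dominated, for $s=0$, by the sub-stochastic matrix of ``interruption before crossing'' probabilities), so \eqref{G_ij_con} indeed determines $\tilde{\bold G}(s)$ unambiguously.
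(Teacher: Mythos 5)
Your central renewal decomposition is exactly the one the paper uses in the final display of its proof: condition on whether the first major-road arrival occurs before the deadline $T$; if it does not, the service time equals $T$ and the joint probability with $J(T)=j$ is $\phi_{ij}(T)$, contributing $e^{-sT}\phi_{ij}(T)$; if it occurs at time $t<T$ with the background process in phase $k$, the driver restarts scanning and, by the strong Markov property of the pair (background phase, major-road counting process), the residual service time is an independent copy of the service time started from phase $k$, contributing $e^{-st}\tilde{G}_{kj}(s)$. So the skeleton of your argument and the resulting fixed-point system coincide with the paper's, and your closing observation that the kernel $\left(\int_0^T e^{-st}\psi_{ik}(t)\,{\rm d}t\right)_{ik}$ is substochastic at $s=0$, hence the system has a unique solution, is a worthwhile addition the paper does not make explicit.

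The genuine problem is in the step you yourself flag as the main obstacle: pinning down the first-arrival density $\psi_{ik}(t)$. The correct identity is $\psi_{ik}(t)=q_k\,\phi_{ik}(t)$ for \emph{all} $i,k$: for the first major-road car to arrive in $(t,t+{\rm d}t)$ with the background process in phase $k$ at that instant, one needs no arrival on $[0,t]$ and phase $k$ at time $t$ (probability $\phi_{ik}(t)$), followed by a Poisson event at the rate $q_k$ of the \emph{current} phase. This is what the paper actually establishes: it writes integral equations for $\phi$ and for the first-passage transform, solves both in closed form (see \eqref{phi_o} and \eqref{psi_o}), and reads off $\tilde{\psi}(\omega)=\tilde{\phi}(\omega)\,\mathrm{diag}(q_i)$, i.e.\ $\psi_{ik}(t)=q_k\phi_{ik}(t)$; its $N=2$ computation uses exactly this. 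Your attempted justification of the case split in \eqref{psiij} for $i\neq k$ --- that $\phi_{ik}(t)$ ``already carries the transition into phase $k$'' so that no rate factor is needed --- does not work: $\phi_{ik}(t)$ is a probability, not a density, and without the factor $q_k$ the quantity $\int_0^T e^{-st}\phi_{ik}(t)\,{\rm d}t$ is not the transform of any first-arrival event (it is not even dimensionally consistent). The case split displayed in \eqref{psiij} appears to be a typographical slip in the paper, contradicted by the paper's own derivation; the fix for your argument is to replace the off-diagonal heuristic by the uniform relation $\psi_{ik}(t)=q_k\phi_{ik}(t)$, justified either by the one-line probabilistic argument above or by the paper's transform computation.
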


\begin{proof}
First we solve the system of equations for $\phi_{ij}(t)$ by taking its Laplace-Stieltjes transform:
\begin{align*}
&\tilde{\phi}_{ij}(\omega)=\int_{t=0}^{\infty}e^{-\omega t}\phi_{ij}(t){\rm d}t\\
&=\frac{1}{\omega+\mu_i+q_i}1_{\{i=j\}}+\frac{1}{\omega+\mu_i+q_i}\sum_{k\neq i}\mu_{ik}\tilde{\phi}_{kj}(\omega)\\
&=\frac{1}{\omega+\mu_i+q_i}1_{\{i=j\}}+\frac{1}{\omega+\mu_i+q_i}\sum_{k =1}^{N}\mu_{ik}\tilde{\phi}_{kj}(\omega)+\frac{\mu_i}{\omega+\mu_i+q_i}\tilde{\phi}_{ij(\omega)}.\\
\end{align*}
This implies that, for  $i,j=1,\dots N$,
\begin{align}
\frac{\omega+q_i}{\omega+\mu_i+q_i}\tilde{\phi}_{ij(\omega)}&=\frac{1}{\omega+\mu_i+q_i}\left(1_{\{i=j\}}+\sum_{k =1}^{N}\mu_{ik}\tilde{\phi}_{kj}(\omega)\right).
\end{align}
We can write the above system of equations in matrix form as
\begin{align*}
\text{diag}\left(\frac{\omega+q_i}{\omega+\mu_i+q_i}\right)\tilde{\phi}(\omega)=\text{diag}\left(\frac{1}{\omega+\mu_i+q_i}\right)\Big(I+Q\tilde{\phi}(\omega)\Big),
\end{align*} where $\text{diag}(d_i)=\text{diag}(d_1,d_2,\dots,d_N)$ is a diagonal matrix and $\tilde{\phi}(\omega)=[\tilde{\phi}_{ij}(\omega)]_{N\times N}.$

After simplification, we obtain $\tilde{\phi}(\omega)$ as
\begin{align}
\tilde{\phi}(\omega)=\left(I-\text{diag}\left(\frac{1}{\omega+q_i}\right)Q\right)^{-1}\text{diag}\left(\frac{1}{\omega+q_i}\right). \label{phi_o}
\end{align}
We readily find $\phi_{ij}(t)=\mathcal{L}^{-1}(\tilde{\phi}_{ij}(\o))$ for $i,j=1,2,\dots,N$, where $\mathcal{L}^{-1}$ is the inverse Laplace–Stieltjes transform operator.

Now, we need to determine the probability that at least one car arrives on the major road before time $T$. Let $T_{\text{next car}}$ be the time when the next car passes on the major road and $\psi_{ij}(t)=\mathbb{P}(T_{\text{next car}}\leq t, J(T_\text{next car})=j|J(0)=i)$. We will show that $\psi_{ij}(t)$ satisfies \eqref{psiij} by also taking its transform:
\begin{align*}
\tilde{\psi}_{ij}(\omega)&=\mathbb{E}[e^{-\o T_{\text{next car}}}1_{\{J(T_\text{next car})=j\}}|J(0)=i]\\
&=\frac{\mu_i+q_i}{\o+\mu_i+q_i}\Big(\frac{\mu_i}{\mu_i+q_i} \sum_{k\neq i}\frac{\mu_{ik}}{\mu_i}\tilde{\psi}_{kj}(\o)+\frac{q_i}{\mu_i+q_i}1_{\{i=j\}}\Big).
\end{align*}
After simplification, we can write this  as
\begin{align*}
(\o +q_i)\tilde{\psi}_{ij}(\omega)=\sum_{k=1}^{N}\mu_{ik}\tilde{\psi}_{kj}(\omega)+q_i 1_{\{i=j\}} \text{ for }i,j=1,\dots, N,
\end{align*} and hence, in matrix form as
\begin{align*}
\text{diag}(\o +q_i)\tilde{\psi}(\omega)=Q\tilde{\psi}(\omega)+\text{diag}(q_i),
\end{align*} where $\tilde{\psi}(\omega)=[\tilde{\psi}_{ij}(\omega)]_{N\times N}$.\\

Therefore, we obtain  $\tilde{\psi}(\omega)$ as
\begin{align}
\tilde{\psi}(\omega)=\left(I-\text{diag}\left(\frac{1}{\omega+q_i}\right)Q\right)^{-1}\text{diag}\left(\frac{q_i}{\omega+q_i}\right). \label{psi_o}
\end{align}
 From Equations \eqref{phi_o} and \eqref{psi_o}, we conclude the following relation
 \begin{align}
 \tilde{\psi}(\omega)=\tilde{\phi}(\omega)\text{diag}(q_i). \label{rel : phi_psi}
 \end{align}
As a result, we obtain after taking the inverse Laplace-Stieltjes transform,
\begin{align}
 \psi(t)=\phi(t)\text{diag}(q_i). \label{rel : phi_psi_t}
 \end{align}
This leads to the conditional service-time LST
\begin{align}
\tilde{G}_{ij}(s)&=e^{-sT}\mathbb{P}(\text{No car on the major road in [0,T] and $J(T)=j$}|J(0)=i)\nonumber\\
&\qquad \qquad \qquad \qquad \qquad \qquad +\int_{t=0}^{T}\sum_{k=1}^{N}\psi_{ik}(t)e^{-st}\tilde{G}_{kj}(s){\rm d}t,
\end{align}\\
for $i,j=1,2,\dots,N$, which can be rewritten to \eqref{G_ij_con}, proving the theorem.
\end{proof}

\textbf{Special case:} Let $N=2$, i.e., the MMPP is having two phases on the major road. In this case, we obtain $\tilde{\phi}(\omega)$ from \eqref{phi_o} as
\begin{align}
\tilde{\phi}(\omega)&=\begin{bmatrix}
1+\frac{\mu_1}{\omega+q_1} & -\frac{\mu_1}{\omega+q_1}\\
-\frac{\mu_2}{\omega+q_2} & 1+\frac{\mu_2}{\omega+q_2}
\end{bmatrix}^{-1}\begin{bmatrix}
\frac{1}{\omega+q_1}& 0\\
0& \frac{1}{\omega+q_2}
\end{bmatrix}\nonumber\\
&=\left(\frac{(\omega+q_1)(\omega+q_2)}{\omega^2+(q_1+\mu_1+q_2+\mu_2)\omega+\mu_1q_2+\mu_2q_1+q_1q_2}\right) \begin{bmatrix}
1+\frac{\mu_2}{\omega+q_2} & \frac{\mu_1}{\omega+q_1}\\
\frac{\mu_2}{\omega+q_2} & 1+\frac{\mu_1}{\omega+q_1}
\end{bmatrix}\begin{bmatrix}
\frac{1}{\omega+q_1}& 0\\
0& \frac{1}{\omega+q_2}
\end{bmatrix}\nonumber\\
&=\left(\frac{1}{\omega^2+(q_1+\mu_1+q_2+\mu_2)\omega+\mu_1q_2+\mu_2q_1+q_1q_2}\right) \begin{bmatrix}
\omega+q_2+\mu_2 & \mu_1\\
\mu_2 & \omega+q_1+\mu_1
\end{bmatrix}. \label{phi12}
\end{align}
Now, firstly, we determine the zeros (say $\omega_1, \omega_2$) of the polynomial $\omega^2+(q_1+\mu_1+q_2+\mu_2)\omega+\mu_1q_2+\mu_2q_1+q_1q_2$ which are given by\begin{align}
\omega=\frac{-(q_1+\mu_1+q_2+\mu_2) \pm \sqrt{q_1^2+q_2^2+\mu_1^2+\mu_2^2+2q_1\mu_1+2\mu_1\mu_2+2q_2\mu_2-2\mu_1q_2-2q_1\mu_2-2q_1q_2}}{2}. \label{omega_zeros}
\end{align}
 From Equation \eqref{omega_zeros}, we observe that the zeros $\omega_1$ and $\omega_2$ are real, distinct and non-positive. Moreover, without loss of generality, we assume that $\omega_1>\omega_2$. \\

  Therefore, we can write Equation \eqref{phi12} as
\begin{align*}
\tilde{\phi}(\omega) &=\begin{bmatrix}
\frac{\omega+q_2+\mu_2}{(\omega-\omega_1)(\omega-\omega_2)} & \frac{\mu_1}{(\omega-\omega_1)(\omega-\omega_2)}\\
\frac{\mu_2}{(\omega-\omega_1)(\omega-\omega_2)} & \frac{\omega+q_1+\mu_1}{(\omega-\omega_1)(\omega-\omega_2)}
\end{bmatrix}.
\end{align*}
 After partial fractions, we obtain
 \begin{align*}
\tilde{\phi}(\omega) &=\frac{1}{\omega_1-\omega_2}\begin{bmatrix}
\frac{\omega_1+q_2+\mu_2}{\omega-\omega_1}-\frac{\omega_2+q_2+\mu_2}{\omega-\omega_2} & \frac{\mu_1}{\omega-\omega_1}-\frac{\mu_1}{\omega-\omega_2}\\
\frac{\mu_2}{\omega-\omega_1}-\frac{\mu_2}{\omega-\omega_2} & \frac{\omega_1+q_1+\mu_1}{\omega-\omega_1}-\frac{\omega_2+q_1+\mu_1}{\omega-\omega_2}
\end{bmatrix}.
\end{align*}
 After taking the inverse Laplace transformation,
the elements $\phi_{ij}(t)$ of the matrix $\phi(t)$ are given by
\begin{align}
\phi_{ij}(t)=\begin{cases}
\frac{\mu_i}{\omega_1-\omega_2}(e^{\omega_1t}-e^{\omega_2t}),  i\neq j\\
\frac{1}{\omega_1-\omega_2}\Big((\omega_1+q_{3-i}+\mu_{3-i})e^{\omega_1t}-(\omega_2+q_{3-i}+\mu_{3-i})e^{\omega_2t}\Big), i=j
\end{cases} \label{phi2ij_t}
\end{align}
From Equation \eqref{rel : phi_psi_t}, we obtain the following relations
\begin{align}
\psi_{ij}(t)=q_j\phi_{ij}(t), ~~~~~~\text{for $i,j=1,2.$}
\end{align}  \\

Now, we know  the expressions for $\psi_{ij}(t)$ and $\phi_{ij}(t)$ which we need to determine the LST of the conditional service time. For $N=2$, Equation \eqref{G_ij_con} becomes
\begin{align*}
\tilde{G}_{ij}(s)
&=e^{-sT}\phi_{ij}(T)+\tilde{G}_{1j}(s)\int_{t=0}^{T}e^{-st}\psi_{i1}(t){\rm d}t+\tilde{G}_{2j}(s)\int_{t=0}^{T}e^{-st}\psi_{i2}(t){\rm d}t.
\end{align*}\\
For $i=1$, after substituting the values of $\psi_{ij}$, we obtain the above expression as
 \begin{align*}
\tilde{G}_{1j}(s)&=e^{-sT}\phi_{1j}(T)+\tilde{G}_{1j}(s)\int_{t=0}^{T}e^{-st}\frac{q_1}{\omega_1-\omega_2}\Big((\omega_1+q_{2}+\mu_{2})e^{\omega_1t}\nonumber\\
&-(\omega_2+q_{2}+\mu_{2})e^{\omega_2t}\Big){\rm d}t
+\tilde{G}_{2j}(s)\int_{t=0}^{T}e^{-st}\frac{\mu_1q_2}{\omega_1-\omega_2}(e^{\omega_1t}-e^{\omega_2t}){\rm d}t.
\end{align*}
After simplification, we can write this as
\begin{align}
&\Bigg( 1-\frac{q_1}{\omega_1-\omega_2}\Big( \frac{(\omega_1-\omega_2)(s+q_2+\mu_2)}{(s-\omega_1)(s-\omega_2)}-\frac{\omega_1+q_2+\mu_2}{s-\omega_1}e^{-(s-\omega_1)T}+\frac{\omega_2+q_2+\mu_2}{s-\omega_2}e^{-(s-\omega_2)T}\Big)\Bigg)\tilde{G}_{1j}(s)\nonumber\\
&-\frac{\mu_1q_2}{\omega_1-\omega_2}\Bigg( \frac{\omega_1-\omega_2}{(s-\omega_1)(s-\omega_2)}-\frac{1}{s-\omega_1}e^{-(s-\omega_1)T}+\frac{1}{s-\omega_2}e^{-(s-\omega_2)T}\Bigg)\tilde{G}_{2j}(s)=e^{-sT}\phi_{1j}(T),\label{G2_ij_first}
\end{align} where $\phi_{1j}(T)$ is given by Equation \eqref{phi2ij_t} with $i=1, t=T$.\\

Similarly, for $i=2$, we obtain the following equation in $\tilde{G}_{1j}(s)$ and $\tilde{G}_{2j}(s)$
 \begin{align}
&-\frac{\mu_2q_1}{\omega_1-\omega_2}\Bigg( \frac{\omega_1-\omega_2}{(s-\omega_1)(s-\omega_2)}-\frac{1}{s-\omega_1}e^{-(s-\omega_1)T}+\frac{1}{s-\omega_2}e^{-(s-\omega_2)T}\Bigg) \tilde{G}_{1j}(s) \nonumber\\
&+\Bigg( 1-\frac{q_2}{\omega_1-\omega_2}\Big( \frac{(\omega_1-\omega_2)(s+q_1+\mu_1)}{(s-\omega_1)(s-\omega_2)}-\frac{\omega_1+q_1+\mu_1}{s-\omega_1} e^{-(s-\omega_1)T} \nonumber\\
&+\frac{\omega_2+q_1+\mu_1}{s-\omega_2}e^{-(s-\omega_2)T}\Big)\Bigg)\tilde{G}_{2j}(s)=e^{-sT}\phi_{2j}(T),\label{G2_ij_second}
\end{align} where $\phi_{2j}(T)$ is given by Equation \eqref{phi2ij_t} with $i=2, t=T$.\\
Hence, we have two linear equations  with two unknowns $\tilde{G}_{1j}(s)$ and $\tilde{G}_{2j}(s)$ for fixed $j$, which we can solve to obtain $\tilde{G}_{1j}(s)$ and $\tilde{G}_{2j}(s)$ for fixed $j$.

\subsection{$\mathbf{B_2}$ (Inconsistent behavior)} Now we assume that every car driver samples a random $T$ for each new `attempt'. An new attempt starts whenever a car on the major road passes that was too close to its predecessor, not leaving a gap that was large enough for the car on the minor road to cross.
Using \eqref{P_No_car}, for $i,j=1,2,\dots,N$, the LST of the conditional service time is given by
\begin{align}
\tilde{G}_{ij}(s)&=\E[e^{-sG^{(n)}}1_{\{J_{n+1}=j\}}|J_n=i, X_{n-1}\geq 1]\nonumber\\
=&\E\Big[e^{-sT}\mathbb{P}(\text{No car on the major in [0,T] and $J(T)=j$}|J(0)=i)\nonumber\\
&\qquad \qquad \qquad \qquad \qquad \qquad+\int_{t=0}^{T}\sum_{k=1}^{N}\psi_{ik}(t)e^{-st}\tilde{G}_{kj}(s){\rm d}t\Big]\nonumber\\
=&\E[e^{-sT}\phi_{ij}(T)]+\sum_{k=1}^{N}\tilde{G}_{kj}(s)\E\Big[\int_{t=0}^{T}\psi_{ik}(t)e^{-st}{\rm d}t\Big].\label{G_ij_rand}
\end{align}
Now we have $N^2$ linear equations for $\tilde{G}_{ij}(s)$. The solution of this system of equations provides $\tilde{G}_{ij}(s)$.\\
\newline
\subsection{$\mathbf{B_3}$ (Consistent behavior)} Every car driver samples a random $T$ at his first attempt and this (random) value will be used consistently for each new attempt by this driver. Let $G^{(T,n)}$ be the service time  in model $B_1$ of the $n$-th low-priority vehicle, conditional on having a deterministic critical gap $T$.  Then for $i,j=1,2,\dots,N$, the LST of the conditional service time for a vehicle in model B$_3$ is given by \\
\begin{align}
\tilde{G}_{ij}(s)=\E[e^{-sG^{(n)}}1_{\{J_{n+1}=j\}}|J_n=i, X_{n-1}\geq 1]=\E\Big[\tilde{G}_{ij}(T,s)\Big], \label{G_ij_same}
\end{align}
where $\tilde{G}_{ij}(T,s)=\E[e^{-sG^{(T,n)}}1_{\{J_{n+1}=j\}}|J_n=i, X_{n-1}\geq 1]$ which we obtain from \eqref{G_ij_con} as \begin{align}
\tilde{G}_{ij}(T,s)&=e^{-sT}\phi_{ij}(T)+\sum_{k=1}^{N}\tilde{G}_{kj}(T,s)\int_{t=0}^{T}e^{-st}\psi_{ik}(t){\rm d}t.\label{G_ij_con_same}
\end{align}
In this model, firstly, $\tilde{G}_{ij}(T,s)$ is determined from the system of equations \eqref{G_ij_con_same}. We then obtain the LST of the conditional service time, $\tilde{G}_{ij}(s)$, from Equation \eqref{G_ij_same}.

\section{Numerical results}\label{numericalresults:ch5}
In this section, we present some numerical examples to demonstrate the impact of batch arrivals and Markov platooning on the delay on the minor road. For simplicity, we restrict ourselves to two phases of the background process of the MMPP on the major road ($N = 2$), corresponding to high and low traffic intensities.

\subsection{Example 1: the impact of batch arrivals} In this example, we compare the expected waiting times on the minor road, for the three behavior types B$_1$, B$_2$, and B$_3$. With behavior type B$_2$, we assume that a minor road driver samples the critical gap (headway) of $6.22$ seconds with probability $0.9$ and $14$ seconds with probability $0.1$, at each new attempt. In model B$_3$, drivers are consistent and keep the same (random) critical gap. In this situation, $90$\% of the minor road drivers need a gap of at least $6.22$ seconds;  the other $10$\% need at least $14$ seconds. For behavior type B$_1$, we take the critical gap $T=6.22\times 0.9+14\times 0.1=7$ seconds. On the major road, vehicles arrive according to rate $q_i$ (veh/h) in phase $i$, for $i=1,2$, with the fixed ratio $q_1=3q_2$, where the background process of the MMPP  stays exponentially distributed times of, on average, $60$ seconds in phase $1$, and $240$ seconds in phase $2$, i.e., $\mu_1=1/60$ and $\mu_2=1/240$. Therefore, the long-term average arrival rate on the major road is given by
\begin{align}\label{qbar}
\bar{q}:=\frac{q_1/\mu_1+q_2/\mu_2}{1/\mu_1+1/\mu_2}=
\frac{q_1\mu_2+q_2\mu_1}{\mu_1+\mu_2}.
\end{align}
We assume that the batch (platoon) arrival rate on the minor road is $\lambda=50$ (batches per hour). We consider the following two  batch size distributions with the same mean $\E[B]=4$:
 \begin{itemize}
   \item Uniform distribution: $\P(B=k)=\begin{cases}
                                              1/7, & \mbox{if } k=1,2,\dots, 7 \\
                                              0, & \mbox{otherwise}.
                                            \end{cases}$
   \item Low/high distribution: $\P(B=k)=\begin{cases}
                                              1/2, & \mbox{if } k=1 \mbox{ or } k=7 \\
                                              0, & \mbox{otherwise}.
                                            \end{cases}$
\end{itemize}
For the model without batch arrivals on the minor road (i.e., mean batch size $1$), we take the arrival rate as $50\times 4=200$ (veh/hour) to make a fair comparison with the model with batch arrivals. From Figure \ref{paper6:example1}, it is first noticed that the expected waiting times for all three behavior types (denoted by $\E[W_1],\E[W_2]$ and $\E[W_3]$) depend not only on the mean batch size, but also on the full distribution of the batch sizes. Second, batch arrivals on the minor road have a negative effect (compared to the individual arrivals on that road), as a function of the average flow rate on the major road, on the expected waiting times. This is to be expected, because even in the case of no traffic on the major road, vehicles arriving in batches still have to wait for all the vehicles in front of them in the same batch, while individual vehicles will hardly have to wait in this situation. Furthermore, we observe that consistent driver behavior results in the longest waiting times. This is due to the fact that one vehicle requiring a large critical gap will need a very long time before crossing the road, while the resampling in model B$_2$ increases the chances of needing a smaller critical headway after a failed attempt.
\begin{figure}[!ht]
\begin{center}
\includegraphics[width=0.8\linewidth]{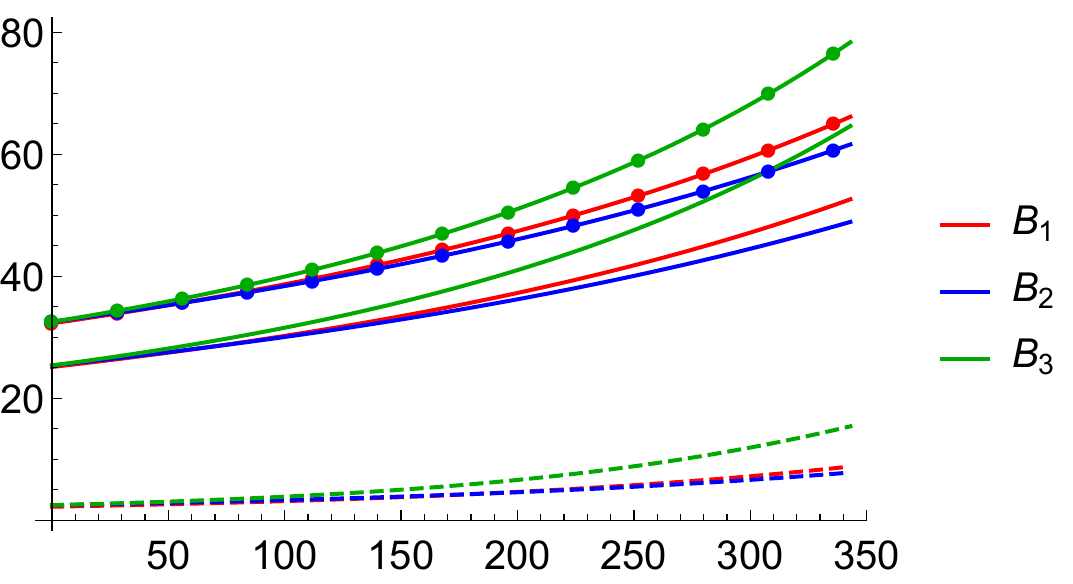}
\end{center}
\caption{Expected waiting times (seconds) of an arbitrary vehicle on the minor road,  as a function of the average flow rate on the major road (veh/h) in Example 1. The solid lines correspond to the model with batch arrivals where the solid lines \emph{with} dots are for the low and high distribution of the batch sizes, and the solid lines \emph{without} dots are for the uniformly distributed batch sizes; the dashed lines correspond to the model without batches.}
\label{paper6:example1}
\end{figure}

For completeness, we have also studied the impact of the batch size distribution, under different traffic intensities on the major road, on the \emph{variance} of the waiting times. For each combination of the three behavior types and batch-size distributions (including no batches) we have computed the mean and variance of the waiting times for $\bar{q}=70$ and $\bar{q}=420$.
The numerical results are shown in Table~\ref{tbl:numericalresults}. The most interesting result here is that the strict ordering for the \emph{mean} waiting times, $\E[W_2]<\E[W_1]<\E[W_3]$, is no longer true for the variance of the waiting times. In the case where batch sizes are uniformly distributed, the variance for model B$_1$ is smaller than the variance for model B$_2$. It might be interesting to try and obtain more insight into this phenomenon, but this is beyond the scope of this paper.

\begin{table}[h!]
\begin{center}
\begin{tabular}{|c|c|c|c|c|}
  \hline
   \multicolumn{5}{|c|}{Low/high distribution} \\
  \hline
   & \multicolumn{2}{|c|}{$\bar{q}=70$}  & \multicolumn{2}{|c|}{$\bar{q}=420$} \\
   \cline{2-5}
   & \multicolumn{1}{|r}{$\E[W]$} & \multicolumn{1}{r|}{Var$(W)$}  & \multicolumn{1}{|r}{$\E[W]$} & \multicolumn{1}{r|}{Var$(W)$} \\
   \hline
  B$_1$ & \multicolumn{1}{|r}{36.55}  & \multicolumn{1}{r|}{1134.62} & \multicolumn{1}{|r}{80.95} & \multicolumn{1}{r|}{6537.15}  \\
  \hline
   B$_2$ & \multicolumn{1}{|r}{28.52}  & \multicolumn{1}{r|}{744.26} & \multicolumn{1}{|r}{64.85} & \multicolumn{1}{r|}{4502.77}  \\
  \hline
 B$_3$ & \multicolumn{1}{|r}{37.58}  & \multicolumn{1}{r|}{1262.58} & \multicolumn{1}{|r}{105.09} & \multicolumn{1}{r|}{12595.68}  \\
  \hline
\multicolumn{5}{c}{}\\[1ex]
%
  \hline
   \multicolumn{5}{|c|}{Uniform distribution} \\
  \hline
   & \multicolumn{2}{|c|}{$\bar{q}=70$}  & \multicolumn{2}{|c|}{$\bar{q}=420$} \\
   \cline{2-5}
   & \multicolumn{1}{|r}{$\E[W]$} & \multicolumn{1}{r|}{Var$(W)$}  & \multicolumn{1}{|r}{$\E[W]$} & \multicolumn{1}{r|}{Var$(W)$} \\
   \hline
  B$_1$ & \multicolumn{1}{|r}{28.52}  & \multicolumn{1}{r|}{744.26} & \multicolumn{1}{|r}{64.85} & \multicolumn{1}{r|}{4502.77}  \\
  \hline
   B$_2$ & \multicolumn{1}{|r}{28.39}  & \multicolumn{1}{r|}{762.28} & \multicolumn{1}{|r}{57.33} & \multicolumn{1}{r|}{3520.27}  \\
  \hline
 B$_3$ & \multicolumn{1}{|r}{29.43}  & \multicolumn{1}{r|}{841.50} & \multicolumn{1}{|r}{86.01} & \multicolumn{1}{r|}{9192.12}  \\
  \hline
\multicolumn{5}{c}{}    \\[1ex]
%
  \hline
   \multicolumn{5}{|c|}{No batches} \\
  \hline
   & \multicolumn{2}{|c|}{$\bar{q}=70$}  & \multicolumn{2}{|c|}{$\bar{q}=420$} \\
   \cline{2-5}
   & \multicolumn{1}{|r}{$\E[W]$} & \multicolumn{1}{r|}{Var$(W)$}  & \multicolumn{1}{|r}{$\E[W]$} & \multicolumn{1}{r|}{Var$(W)$} \\
   \hline
  B$_1$ & \multicolumn{1}{|r}{2.82}  & \multicolumn{1}{r|}{24.12} & \multicolumn{1}{|r}{12.73} & \multicolumn{1}{r|}{453.44}  \\
  \hline
   B$_2$ & \multicolumn{1}{|r}{3.02}  & \multicolumn{1}{r|}{30.24} & \multicolumn{1}{|r}{10.68} & \multicolumn{1}{r|}{327.03}  \\
  \hline
 B$_3$ & \multicolumn{1}{|r}{3.35}  & \multicolumn{1}{r|}{40.38} & \multicolumn{1}{|r}{24.48} & \multicolumn{1}{r|}{1920.96}  \\
  \hline
\end{tabular}
\end{center}
\caption{Means and variances of the waiting times for different batch-size distributions, for each of the three driver behaviors, under low and high traffic volumes on the major road.}
\label{tbl:numericalresults}
\end{table}

\subsection{Example 2: the impact of Markov platooning}
In this example, we take the same settings as in Example 1, but we make two adjustments. First, we
change the critical gaps from $6.22$ to $5$ seconds and from $14$ to $25$ seconds, for the behavior types B$_2$ and B$_3$. The expected critical gap remains $5\times 0.9+25\times 0.1=7$ seconds (which is also the value we take for $T$ in model B$_1$), but the variation is much higher for reasons that will become apparent later. Note that for behavior B$_1$ the distribution is irrelevant and, as such, the results will be the same as in the previous example.
Second, we fix the uniform distribution for the batch sizes on the minor road, as considered in Example 1, where the batch arrival rate is taken as  $\lambda=50$ (batches per hour). For these settings,  we compare the expected waiting times of the model with and without Markov platooning on the major road,  where in the case without platooning, we assume Poisson arrivals on this road,  with rate $\bar{q}$, which can be obtained from Equation \eqref{qbar}. From Figure \ref{paper6:example2}, one can clearly observe that platoon forming has a significant impact on the mean waiting times. However, it is interesting to observe that for small values of $\bar{q}$, platoon forming results in slightly higher waiting times, whereas for large values of $\bar{q}$ platoon forming results in smaller delays on the minor road. Now it also becomes clear why we have taken such extreme (arguably unrealistic) values for the critical gaps ($5$ and $25$ seconds, respectively). Our goal was to magnify the effect of resampling: even more than in Example 1, we can observe that consistent driver behavior (model B$_3$) results in \emph{much} longer waiting times than inconsistent driver behavior (B$_2$) or constant gaps (B$_1$). The reason is that once a vehicle samples a large critical gap of $25$ seconds, it is stuck for a very long time before it can cross the intersection, causing a long queue to build up. Resampling resolves this issue because, in particular when there is much traffic on the high priority road, there will be many attempts in a short period of time and the driver is much more likely to sample a new, sufficiently small critical gap to cross the major road. For this reason, we have decided to plot the mean waiting time in Figure~\ref{paper6:example2} for B$_3$ in a separate figure, because at $q=376.2$ the queue already becomes unstable for the model with Poisson arrivals.

\begin{figure}[!ht]
\parbox{0.49\linewidth}{\centering
\includegraphics[width=\linewidth]{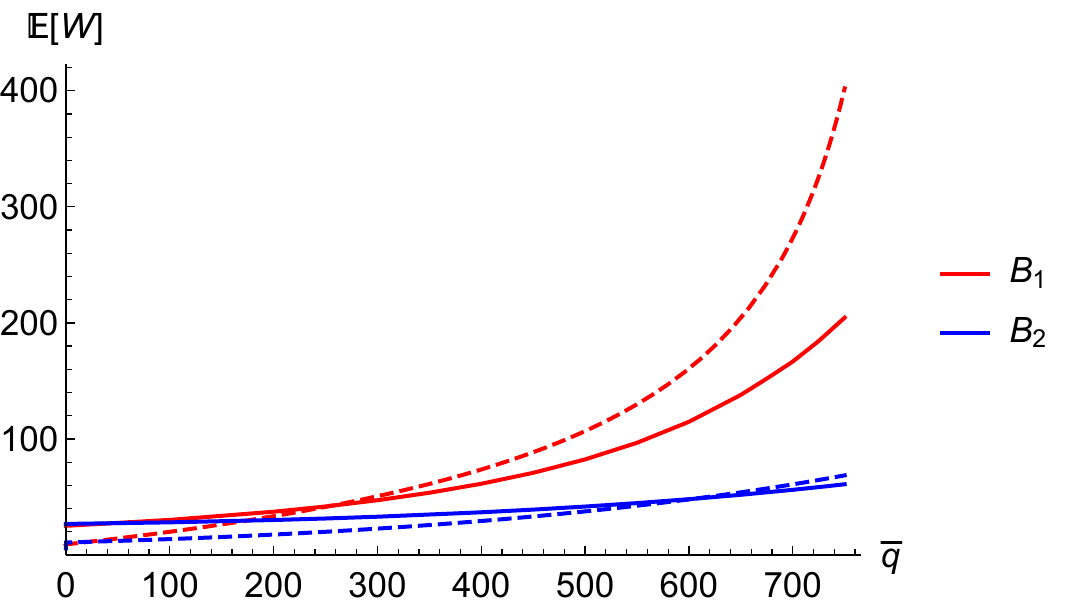}\\
\scriptsize (a)  Expected waiting times for models B$_1$ and B$_2$.}
\parbox{0.49\linewidth}{\centering
\includegraphics[width=\linewidth]{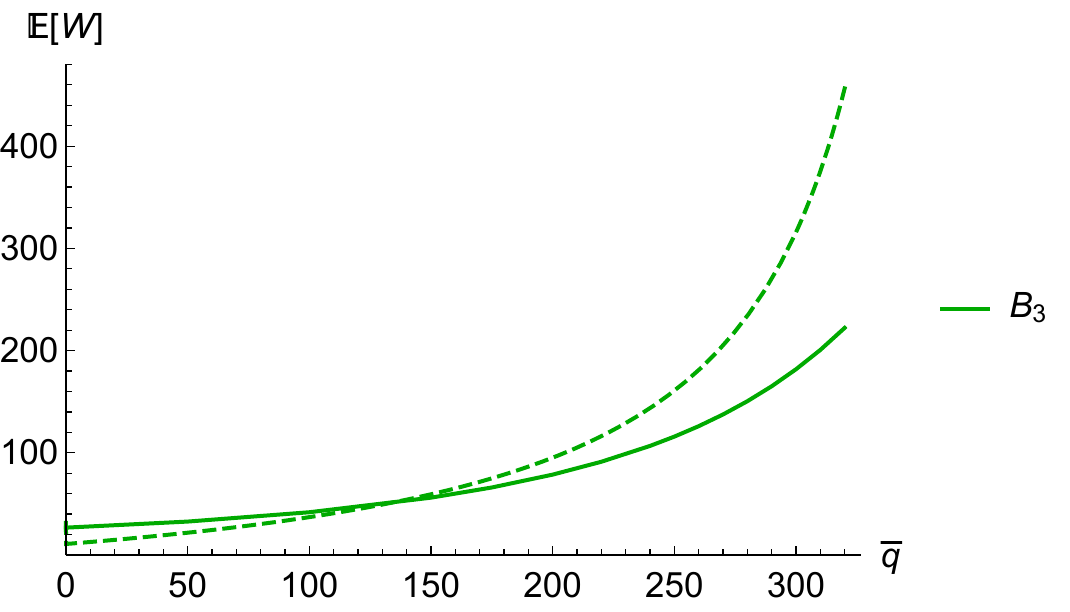}\\
\scriptsize (b) Expected waiting times for model B$_3$.}
\caption{Expected waiting times (seconds) of an arbitrary vehicle on the minor road, as a function of the average flow rate on the major road (veh/h) in Example 2. The solid lines correspond to the model with Markov platooning; the dashed lines correspond to the model without platooning.}
\label{paper6:example2}
\end{figure}

\subsection{Example 3: Approximations}

Computing the sojourn time and waiting time distributions can be quite computationally challenging, in particular when $N$ grows large. It might be useful, for practical purposes, to have a good approximation for the mean queue lengths or the mean waiting/sojourn times. Although it is not the main scope of the present paper, we briefly illustrate how to create an excellent approximation using a well-established technique, interpolating between the light-traffic (LT) and heavy-traffic (HT) limits. For more details about this technique, we refer the reader to \cite{boonapprox2009,reimansimon88,simon92}. The idea is straightforward: from \cite{AbhishekHeavyTraffic} we know the HT limit of the scaled queue length,
\[\lim_{\rho\uparrow1} (1-\rho)\E[X]=1/\eta,\]
with $\eta$ as defined in \cite[Theorem 1]{AbhishekHeavyTraffic}. Moreover, the LT limit of $X$ is easy to find in an intuitive manner. When $\rho$ tends to zero, the system is always empty upon the arrival of a batch, and $\E[X]$ is equal to the mean number of customers (in the same batch) \emph{behind} an arbitrary customer. The argument is similar to \eqref{Prob_mth_batch}; if a customer is the $m$-th in a batch of size $k$, then he leaves behind $k-m$ customers:
\[\delta:=\lim_{\rho\downarrow0} \E[X]=
\sum_{m=1}^\infty\sum_{k=m}^{\infty}\frac{kb_k}{\E[B]} \frac{1}{k}(k-m)
=\sum_{k=1}^\infty k r_{k+1}.
\]
Now we can develop the following approximation for $\E[X]$:
\[
\E[X^\text{approx}]=\frac{\delta+\rho(1/\eta-\delta)}{1-\rho},
\]
which is exact in the limiting cases $\rho\downarrow0$ and $\rho\uparrow1$. An approximation for the mean sojourn time can be found by using the following relation between $\E[X]$ and $\E[S]$ (cf. \cite{AbhishekMergingModel}):
\begin{equation}
\E[S]=\frac{1}{\lambda \E[B]}\left(\E[X]-\frac{B''(1)}{2\E[B]}\right).\label{ewapprox}
\end{equation}
Substituting $\E[X^\text{approx}]$ for $\E[X]$ in \eqref{ewapprox} yields a closed-form approximation for the mean sojourn time, $\E[S^\text{approx}]$, which is remarkably accurate due to its construction.
As an illustration, we select the MMPP model of Example 2 with driver behavior B$_1$, but fixing $\bar{q}=500$ vehicles per hour and varying $\lambda$ instead. The parameter values are
\[
\delta=2, \eta=0.343, \E[B]=4, B''(1)=16, \rho=45.67\lambda.
\]
The resulting approximation \eqref{ewapprox} is plotted in Figure~\ref{fig:approx}, together with the exact results, confirming that this is an excellent approximation. Note that the mean \emph{waiting} time is slightly more difficult to approximate, if one wants it to be exact in light traffic again, due to the fact that the service times are different for customers arriving in an empty system than for customers arriving in a non-empty system. Still, simply taking $\E[W^\text{approx}] = \E[S^\text{approx}] - \E[G^{(n)}|X_{n-1}\geq 1]$ will give very accurate results for moderate to high values of $\rho$.

\begin{figure}[!ht]
\begin{center}
\includegraphics[width=0.7\linewidth]{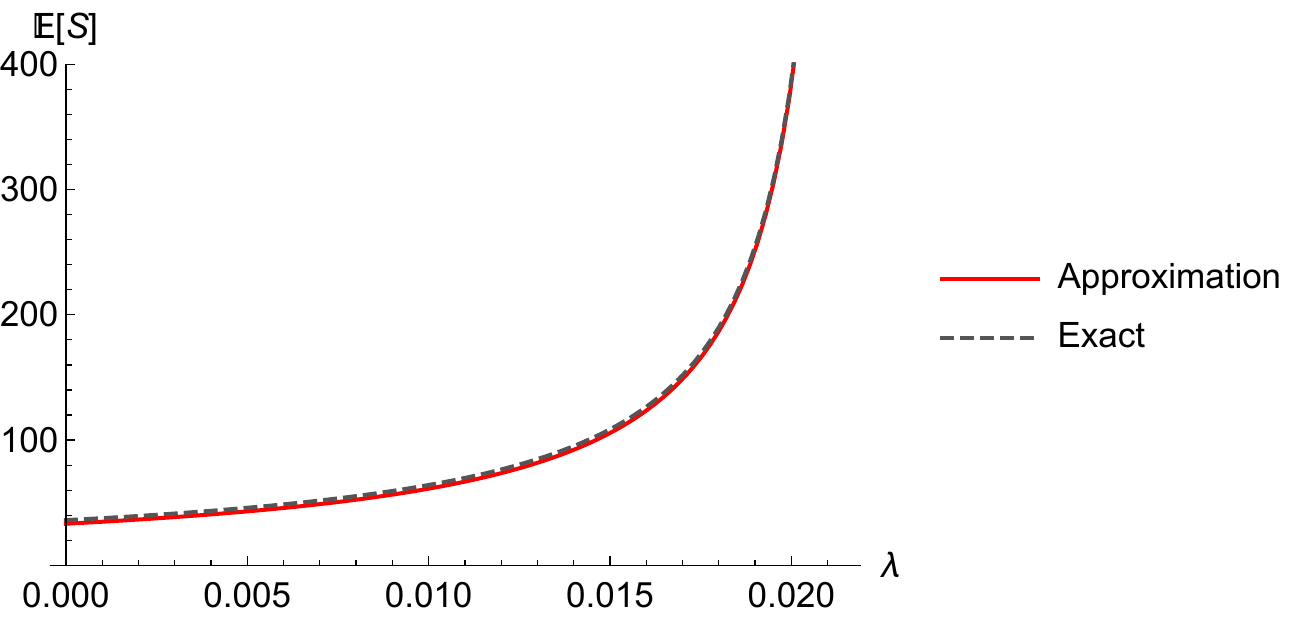}
\caption{Approximated and exact mean sojourn times for Numerical Example 3.}
\label{fig:approx}
\end{center}
\end{figure}

\begin{acknowledgements}
 The research of Abhishek and Rudesindo~N\'u\~nez~Queija is partly funded by NWO Gravitation project {\sc Networks}, grant number 024.002.003. The authors thank Michel Mandjes ( University of Amsterdam) and Onno Boxma ( Eindhoven University of Technology) for helpful discussions.
\end{acknowledgements}

\bibliographystyle{abbrv}

\end{document}